\theoremstyle{plain}
\newtheorem{theorem}{Theorem}
\newtheorem{corollary}{Corollary}
\theoremstyle{definition}
\theoremstyle{remark}
\numberwithin{equation}{section}
\newdimen\plusheight
\def\+{\;\lower\plusheight\hbox{$+$}\;}
\newdimen\minusheight
\def\-{\;\lower\minusheight\hbox{$-$}\;}
\newdimen\cdotsheight
\def\cds{\lower\cdotsheight\hbox{$\cdots$}}
\begin{document}
\bibliographystyle{plain}

\title[Khovanskii's Matrix Methods for extracting Roots ]
 {Some Observations on Khovanskii's Matrix Methods for extracting  Roots
of Polynomials}
\author{J. Mc Laughlin}
\address{Mathematics Department\\
 Trinity College\\
300 Summit Street, Hartford, CT 06106-3100}
\email{james.mclaughlin@trincoll.edu}
\author{ B. Sury}
\address{Stat-Math Unit\\
Indian Statistical Institute\\
8th Mile Mysore Road\\
Bangalore 560 059\\
India.\\
}
\email{sury@isibang.ac.in}
\keywords{rational approximation to  roots of polynomials}

\begin{abstract}
In this article we apply a formula for   the $n$-th power of a
$3\times 3$ matrix
(found previously by the authors) to investigate a procedure
of Khovanskii's for finding the cube root of a positive integer.

We show, for each positive integer $\alpha$, how to construct certain families
of integer sequences such that a certain rational expression,
involving the  ratio of successive terms in each family, tends
to $\alpha^{1/3}$. We also show how to choose the optimal value of a free
parameter to get maximum speed of convergence.

We  apply a similar method, also due to Khovanskii, to a more general
class of cubic equations, and, for each such cubic, obtain
a sequence of rationals that converge to the real root of the cubic.

We prove that Khovanskii's  method for finding the $m$-th
($m \geq 4$) root of a positive integer  works,
provided a free parameter is chosen to satisfy a very simple condition.

Finally, we briefly consider another procedure of Khovanskii's,
which also involves $m \times m$ matrices, for approximating
the root of an arbitrary polynomial of degree $m$.
\end{abstract}

\maketitle

\section{Introduction}
In \cite{K63} Khovanskii described a method which uses
 powers of $3\times3$ matrices to approximate cube roots of integers.
More precisely, let $\alpha$ be a positive integer whose cube root
is desired and let $a$ be an arbitrary integer. Define the matrix
$A$ by
\begin{equation}\label{mateq}
A=
\left (
\begin{array}{ccc}
a & \alpha &\alpha\\
1&a&\alpha\\
1&1&a
\end{array}
\right).
\end{equation}
and let $A_{n,i,j}$ denote the $(i,j)$-th entry of $A^{n}$. Suppose
\begin{align}\label{Aneq}
&\lim_{n \to \infty} \frac{A_{n,1,1}}{A_{n,3,1}}=x, &
&\lim_{n \to \infty} \frac{A_{n,2,1}}{A_{n,3,1}}=y, &
\end{align}
where $x$ and $y$ are finite and $x+y+1 \not = 0$. Then
$x=\sqrt[3]{\alpha^{2}}$ and $y=\sqrt[3]{\alpha}$.

Khovanskii did not give conditions which insure the convergence
of the sequences above. Also, he
did not investigate the speed of convergence or the question of the optimal
choice of the integer $a$ to ensure the most rapid convergence.
Further, there is the difficulty that is necessary to compute the powers
of the matrix $A$.

In this present paper we show that the sequences
$\{A_{n,1,1}/A_{n,3,1}\}_{n=1}^{\infty}$,
$\{A_{n,2,1}/A_{n,3,1}\}_{n=1}^{\infty}$
converge for all  integers $a$ greater than a certain explicit lower bound.
We also
determine, for a given $\alpha$, the choice of $a$ which insures the most
rapid convergence. We also give precise estimates for
$|A_{n,2,1}/A_{n,3,1}-\alpha^{1/3}|$, for this optimal choice
of $a$. Finally, we employ a closed formula
for the $n$-th power of a $3\times 3$ matrix from our paper \cite{McLS04},
which actually makes it unnecessary to perform the matrix multiplications.
We have the following theorems.

\textbf{Theorem \ref{t3}.}
\emph{Let $\alpha>1$  be an
integer and $a$ be any integer such that }
\begin{equation*}
a>-\frac{{\alpha }^{2/3}}
    {1 + {\alpha }^{1/3}}.
\end{equation*}
\emph{Set}
\begin{multline*}
a_n=\\
\sum_{
i,j
}(-1)^i\binom{i+j}{ j}\binom{n-i-2j }{i+j}(3a)^{n-2i-3j}(3a^{2}-3\alpha)^i
(a^3+\alpha -3a\alpha + \alpha^{2})^j.
\end{multline*}
\emph{Then }
\begin{equation*}
\lim_{n \to \infty} 1+
\frac{\alpha -1}
{\displaystyle{
 \frac{a_{n}}{a_{n-1}}-a+1 } }=
\alpha^{1/3}.
\end{equation*}
Note that the limit is independent of the choice of the parameter $a$.

\textbf{Theorem \ref{tdiff}.}
\emph{Let $\alpha$ and $a$ be as described in Theorem \ref{t3}. Let
the matrix $A$ be as described at \eqref{mateq}. Then  the choice of $a$ which
gives the most rapid convergence is one of the two integers closest to}
\begin{equation*}
\bar{a}=  \frac{{\alpha }^{1/3} + \alpha }
  {1 + {\alpha }^{1/3}}.
\end{equation*}
\emph{For this choice of $a$ and $n \geq 3$,}
\begin{equation*}
\frac{A_{n,\,2,\,1}}
{A_{n,\,3,\,1}}-\,\alpha^{1/3}
=\left(
\left(   \omega  -1\right) \,\omega \,
    \left( {\left(\frac{ -\omega}{2}  \right) }^n - {\left( \frac{-{\omega }^2 }{2}\right) }^n \right)
+\frac{n\delta_{3}}{2^{n}\alpha^{1/3}}
+\frac{\delta_{4}}
{2^{2n}}
\right)\alpha^{1/3},
\end{equation*}
\emph{where $\omega = \exp (2 \pi \imath/3)$, $|\delta_{3}|\leq 8$
and $|\delta_{4}|\leq 48$.}

We also investigate two other procedures due to Khovanskii. One
is a method for finding a root of $x^{3}-p\,x-q$ and the other
is a method for finding $\alpha^{1/m}$, where $\alpha$
and $m$ are arbitrary positive
integers.
Again, Khovanskii's methods involve sequences of powers of matrices
and rely on the ratios of certain  matrix entries converging,
and he did not give any conditions which guarantee convergence.
We give criteria which insure convergence.
In the case of $x^{3}-p\,x-q$, we again prove a result which makes the
actual matrix multiplications unnecessary.  We have the following theorems.

\textbf{Theorem \ref{tpq}.}
\emph{Let $p>0$, $q>0$ be integers such that $27q^{2}-4p^{3}>0$. Define }
\[
a_{n}=\sum_{
2i+3j\leq n
}\binom{i+j}{ j}\binom{n-i-2j }{i+j}3^{n-2i-3j}(3-p)^{i}(q-p+1)^{j}.
\]
\emph{Then}
\begin{equation}
-1+\lim_{n \to \infty}\frac{a_{n}}{a_{n-1}}=
\frac{{\left( 2/3\right) }^{1/3}\,p}
   {{\left( 9\,q +{\sqrt{81\,q^2-12\,p^3 }} \right) }^{1/3}} +
  \frac{{\left( 9\,q + {\sqrt{81\,q^2-12\,p^3 }} \right) }^{1/3}}
   {2^{1/3}\,3^{2/3}},
\end{equation}
\emph{the real root of $f(x)=x^{3}-px-q$.}

Let
\begin{equation}\label{matneqa}
A=
\left (
\begin{matrix}
a         & \alpha           &\alpha &\alpha &\dots &\alpha\\
1         &  a                  &\alpha&\alpha &\dots &\alpha\\
1         &          1         &a       &\alpha &\dots &\alpha\\
\vdots & \vdots           & \vdots & \ddots & \vdots & \vdots\\
1   & 1                  &  1        & \dots   &  a      & \alpha \\
1   & 1                  &  1        & \dots   &  1      & a
\end{matrix}
\right).
\end{equation}
\textbf{Theorem \ref{tm}}
\emph{ Let $A$ be the $m \times m$ matrix defined above at \eqref{matneqa}. Let $A_{n,\,i,\,j}$
denote the $(i,\,j)$ entry of $A^{n}$ and suppose $a>0$. Then }
\begin{equation*}
\lim_{n \to \infty}
\frac{A_{n,\,i,\,j}}
{A_{n,\,u,\,v}}
= \alpha^{(j+u-i-v)/m}.
\end{equation*}

Some of the work in this paper relies
heavily on results proved in our paper \cite{McLS04}:
\begin{theorem}\label{t2}
Suppose $A \in M_k(K)$ and let
\[
T^k-s_1T^{k-1}+s_2T^{k-2}+ \cdots +(-1)^ks_k\,I
\]
denote its characteristic polynomial.
Then, for all $n \geq k$, one has
\[
A^n = b_{k-1}A^{k-1}+b_{k-2}A^{k-2}+ \cdots + b_0 \,I,
\]
where
{\allowdisplaybreaks
\begin{align*}
b_{k-1}& = a(n-k+1),\\
b_{k-2} &= a(n-k+2)-s_1a(n-k+1),\\
& \phantom{a}\vdots \\
b_1& = a(n-1)-s_1a(n-2)+ \cdots + (-1)^{k-2}s_{k-2}a(n-k+1),\\
b_0 &= a(n)-s_1a(n-1)+ \cdots + (-1)^{k-1}s_{k-1}a(n-k+1)\\
& = (-1)^{k-1}s_ka(n-k).
\end{align*}
}
and
\[
a(n) = c(i_2,\cdots,i_k,n)
s_1^{n-i_2-2i_3- \cdots -(k-1)i_k}
(-s_2)^{i_2}s_3^{i_3} \cdots ((-1)^{k-1}s_k)^{i_k},
\]
with
\[
c(i_2,\cdots,i_k,n) =
\frac{ (n-i_2-2i_3- \cdots -(k-1)i_k )!}{
i_2! \cdots i_k! (n-2i_2-3i_3- \cdots -(ki_k )!}.
\]
\end{theorem}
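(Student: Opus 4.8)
The plan is to reduce everything to a single scalar linear recurrence together with one generating-function identity, and then to substitute the matrix at the very end. First I would invoke Cayley--Hamilton: since $\chi(A)=0$, writing $T^{n}=q_{n}(T)\chi(T)+r_{n}(T)$ with $\deg r_{n}<k$ gives $A^{n}=r_{n}(A)$, so every power of $A$ lies in the commutative algebra $K[A]$ and it suffices to establish the corresponding identity in $R=K[T]/(\chi(T))$. Writing $\tau$ for the class of $T$, the conclusion $A^{n}=\sum_{j=0}^{k-1}b_{j}A^{j}$ will follow from $\tau^{n}=\sum_{j=0}^{k-1}b_{j}\tau^{j}$ by applying the evaluation map $T\mapsto A$ (note this uses only $\chi(A)=0$, so nothing is lost if the minimal polynomial differs from $\chi$). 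I would introduce the scalar sequence $a(n)$ through its generating function
\[
\sum_{n\ge 0}a(n)x^{n}=\frac{1}{\chi^{*}(x)},\qquad
\chi^{*}(x)=1-s_{1}x+s_{2}x^{2}-\cdots+(-1)^{k}s_{k}x^{k},
\]
the reciprocal of the characteristic polynomial; equivalently $a(n)$ is the fundamental solution of $a(n)=s_{1}a(n-1)-s_{2}a(n-2)+\cdots+(-1)^{k-1}s_{k}a(n-k)$ with $a(0)=1$ and $a(-1)=\cdots=a(-(k-1))=0$. This normalization is exactly what will make the stated $b_{j}$ drop out.

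The heart of the argument is one generating-function identity in $R[[x]]$. I would introduce the polynomial
\[
N(x)=\sum_{m=0}^{k-1}\Bigl(\sum_{i=0}^{m}(-1)^{i}s_{i}\,\tau^{\,m-i}\Bigr)x^{m},\qquad s_{0}=1,
\]
whose coefficients satisfy $n_{0}=1$ and $n_{m}=\tau n_{m-1}+(-1)^{m}s_{m}$, and then verify directly that $(1-\tau x)N(x)=\chi^{*}(x)$. The coefficients of $x^{0},\dots,x^{k-1}$ match by this recurrence for the $n_{m}$; the only nontrivial check is the coefficient of $x^{k}$, where $-\tau\,n_{k-1}$ must equal $(-1)^{k}s_{k}$, and this is precisely the relation $\chi(\tau)=0$, i.e.\ Cayley--Hamilton once more. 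Since $\chi^{*}$ and $1-\tau x$ both have constant term $1$ and are therefore units in $R[[x]]$, this rearranges to
\[
\sum_{n\ge 0}\tau^{n}x^{n}=\frac{1}{1-\tau x}=N(x)\cdot\frac{1}{\chi^{*}(x)}=N(x)\sum_{n\ge0}a(n)x^{n}.
\]
Reading off the coefficient of $x^{n}$ and collecting powers of $\tau$ (each $\tau^{m-i}$ with $0\le m-i\le k-1$ is already a basis element, so no further reduction is needed) gives $\tau^{n}=\sum_{j=0}^{k-1}b_{j}\tau^{j}$ with $b_{j}=\sum_{\ell=0}^{k-1-j}(-1)^{\ell}s_{\ell}\,a(n-j-\ell)$, which is exactly the list $b_{k-1},\dots,b_{0}$ in the statement. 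The closing simplification $b_{0}=(-1)^{k-1}s_{k}a(n-k)$ is immediate from the defining recurrence, rewritten as $\sum_{\ell=0}^{k-1}(-1)^{\ell}s_{\ell}a(n-\ell)=(-1)^{k-1}s_{k}a(n-k)$.

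It remains to make $a(n)$ explicit, which I would do by expanding the geometric series. Setting $w=s_{1}x-s_{2}x^{2}+\cdots+(-1)^{k-1}s_{k}x^{k}$ so that $1/\chi^{*}(x)=\sum_{m\ge0}w^{m}$, the multinomial theorem applied to $w^{m}$ produces a sum over compositions $i_{1}+\cdots+i_{k}=m$ with $x$-exponent $i_{1}+2i_{2}+\cdots+ki_{k}$. Extracting the coefficient of $x^{n}$ forces $i_{1}+2i_{2}+\cdots+ki_{k}=n$; eliminating $i_{1}=n-2i_{2}-\cdots-ki_{k}$ and using $m=n-i_{2}-2i_{3}-\cdots-(k-1)i_{k}$ turns the multinomial coefficient $\binom{m}{i_{1},\dots,i_{k}}$ into the factor $c(i_{2},\dots,i_{k},n)$ and attaches the sign $(-1)^{\ell-1}$ to each $s_{\ell}$, yielding the displayed formula for $a(n)$ (the natural exponent of $s_{1}$ being $i_{1}=n-2i_{2}-\cdots-ki_{k}$, in agreement with the $(3a)^{\,n-2i-3j}$ occurring in Theorem~\ref{t3}). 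The main obstacle I anticipate is purely bookkeeping: keeping the two index quantities ($m$ versus $i_{1}$) and the shifted arguments $a(n-j-\ell)$ consistent, and confirming that for $n\ge k$ every argument appearing is nonnegative, so that the extracted coefficients are the intended $a$-values. All the conceptual content sits in the exactness $ (1-\tau x)N(x)=\chi^{*}(x)$, which is Cayley--Hamilton in disguise.
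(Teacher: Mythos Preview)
Your proof is correct, but there is nothing in the present paper to compare it against: Theorem~\ref{t2} is quoted verbatim from the authors' earlier paper \cite{McLS04} and is stated here without proof. So any comparison would have to be with that reference, not with this article.

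That said, your generating-function argument is clean and self-contained. The key identity $(1-\tau x)N(x)=\chi^{*}(x)$ in $R[[x]]$, with $R=K[T]/(\chi)$, is exactly Cayley--Hamilton repackaged, and reading off the coefficient of $x^{n}$ in $N(x)/\chi^{*}(x)$ immediately yields $b_{j}=\sum_{\ell=0}^{k-1-j}(-1)^{\ell}s_{\ell}\,a(n-j-\ell)$, matching the displayed list. The multinomial expansion of $(1-w)^{-1}$ with $w=\sum_{\ell=1}^{k}(-1)^{\ell-1}s_{\ell}x^{\ell}$ then delivers the closed form for $a(n)$. You have also, correctly, spotted that the exponent of $s_{1}$ in the statement should be $i_{1}=n-2i_{2}-3i_{3}-\cdots-ki_{k}$ rather than $n-i_{2}-2i_{3}-\cdots-(k-1)i_{k}$; this is confirmed by the $k=3$ specialization in Corollary~\ref{c1}, where the exponent of $t=s_{1}$ is $n-2i-3j$. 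The only cosmetic gap is that the statement suppresses the summation over $(i_{2},\dots,i_{k})$, but your derivation makes the intended range explicit.
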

For the case $k=3$ we get the following corollary.
\begin{corollary}\label{c1}
(i) Let $A\in M_{3}(K)$
and let $X^3 = tX^2 - sX + d$ denote the characteristic polynomial of $A$.
Then, for all $n \geq 3$,
\begin{equation}\label{eq1}
A^n = a_{n-1}A + a_{n-2} Adj(A) + (a_n - ta_{n-1})\,I,
\end{equation}
where
\[
a_n =
\sum_{2i+3j \leq n} (-1)^i \binom{i+j}{ j} \binom{n-i-2j }{i+j} t^{n-2i-3j}s^id^j
\]
for $n>0$ and $a_0=1$.
\end{corollary}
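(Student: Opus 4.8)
The plan is to obtain Corollary \ref{c1} as the case $k=3$ of Theorem \ref{t2}, combined with a single application of the Cayley--Hamilton theorem to trade the $A^2$ term for an adjugate. First I would fix notation: writing the characteristic polynomial of $A$ as $X^3 - tX^2 + sX - d$ means taking $s_1 = t$, $s_2 = s$, $s_3 = d$ in Theorem \ref{t2}. Specializing that theorem to $k = 3$ then yields, for all $n\ge 3$,
\[
A^n = b_2 A^2 + b_1 A + b_0 I,\qquad b_2 = a(n-2),\quad b_1 = a(n-1) - t\,a(n-2),\quad b_0 = a(n) - t\,a(n-1) + s\,a(n-2),
\]
where $a(n)$ is the closed form supplied by Theorem \ref{t2}.

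Next I would verify that this $a(n)$ is exactly the sequence $a_n$ in the statement of the corollary. For $k=3$, writing $i_2=i$ and $i_3=j$, the coefficient $c(i,j,n)$ of Theorem \ref{t2} is
\[
c(i,j,n) = \frac{(n-i-2j)!}{i!\,j!\,(n-2i-3j)!} = \frac{(i+j)!}{i!\,j!}\cdot\frac{(n-i-2j)!}{(i+j)!\,(n-2i-3j)!} = \binom{i+j}{j}\binom{n-i-2j}{i+j},
\]
while the monomial factors $s_1^{\,\cdot}(-s_2)^{i}s_3^{\,j}$ become $(-1)^{i}\,t^{\,n-2i-3j}s^{i}d^{j}$, and the range of summation (the indices for which every factorial is defined) is precisely $2i+3j\le n$. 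Hence $a(n)$ agrees with the displayed $a_n$, and $a_0 = 1$ is just the single $i=j=0$ term.

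Then I would invoke Cayley--Hamilton: from $A^3 - tA^2 + sA - dI = 0$ one gets $A\,(A^2 - tA + sI) = dI = \det(A)\,I$, so $A^2 - tA + sI = Adj(A)$. (This is an identity of polynomials in the entries of $A$, hence valid for singular $A$ as well; alternatively one argues by continuity/density from the invertible case.) Substituting $A^2 = Adj(A) + tA - sI$ into the expression for $A^n$ and collecting terms, the coefficient of $Adj(A)$ is $b_2 = a(n-2)$, the coefficient of $A$ is $t\,a(n-2) + b_1 = a(n-1)$, and the coefficient of $I$ is $b_0 - s\,a(n-2) = a(n) - t\,a(n-1)$. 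Renaming $a(\cdot)$ as $a_{\cdot}$ gives exactly \eqref{eq1}.

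The whole argument is essentially bookkeeping. The only places that demand a little care are the factorial manipulation identifying $c(i,j,n)$ with the product of two binomial coefficients, keeping the index shifts in $b_0$, $b_1$, $b_2$ straight, and the (standard) remark that $Adj(A) = A^2 - tA + sI$ continues to hold when $A$ is not invertible. I do not expect a genuine obstacle.
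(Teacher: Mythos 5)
Your argument is correct and is exactly the route the paper intends (the paper offers no explicit proof, treating the corollary as the $k=3$ specialization of Theorem \ref{t2} combined with the Cayley--Hamilton identity $Adj(A)=A^2-tA+sI$), and all three coefficient bookkeeping checks ($b_2$, $tb_2+b_1=a(n-1)$, $b_0-sb_2=a(n)-ta(n-1)$) come out right. One small remark: you silently used the exponent $t^{\,n-2i-3j}$ rather than the $s_1^{\,n-i_2-2i_3-\cdots-(k-1)i_k}$ printed in Theorem \ref{t2}; the printed exponent is a typo (it should be $n-2i_2-3i_3-\cdots-ki_k$, matching the denominator factorial in $c(i_2,\dots,i_k,n)$ and the weighted-degree count), so your reading is the correct one.
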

We  use this corollary in conjunction with Khovanskii's ideas to
determine sequences of rational approximations to  the real root
of certain types of polynomials.

\section{Approximating Cuberoots of Positive Integers}
We next prove Theorem \ref{t3}.
\begin{theorem}\label{t3}
Let $\alpha>1$  be an
integer and $a$ be any integer such that
\begin{equation}\label{acond}
a>-\frac{{\alpha }^{2/3}}
    {1 + {\alpha }^{1/3}}.
\end{equation}
Set
\begin{multline*}
a_n=\\
\sum_{
i,j
}(-1)^i\binom{i+j}{ j}\binom{n-i-2j }{i+j}(3a)^{n-2i-3j}(3a^{2}-3\alpha)^i
(a^3+\alpha -3a\alpha + \alpha^{2})^j.
\end{multline*}
Then
\begin{equation}\label{ratiolim}
\lim_{n \to \infty} 1+
\frac{\alpha -1}
{\displaystyle{
 \frac{a_{n}}{a_{n-1}}-a+1 } }=
\alpha^{1/3}.
\end{equation}
\end{theorem}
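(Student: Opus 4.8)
The plan is to connect the combinatorial sum defining $a_n$ to the matrix $A$ in \eqref{mateq} via Corollary \ref{c1}, and then to carry out an eigenvalue analysis. First I would compute the characteristic polynomial of $A$: a direct expansion gives $X^3 = 3a\,X^2 - (3a^2-3\alpha)X + (a^3 + \alpha - 3a\alpha + \alpha^2)$, so that $t = 3a$, $s = 3a^2 - 3\alpha$, and $d = a^3 + \alpha - 3a\alpha + \alpha^2$ in the notation of the corollary. Then the given $a_n$ is exactly the sequence produced by Corollary \ref{c1} for this matrix, and \eqref{eq1} expresses $A^n$ as $a_{n-1}A + a_{n-2}\,\mathrm{Adj}(A) + (a_n - 3a\,a_{n-1})I$. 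The point of this identity is that the numerators and denominators in Khovanskii's ratios \eqref{Aneq} become linear combinations of $a_{n-1}$, $a_{n-2}$, $a_n$ with coefficients read off from the entries of $A$ and $\mathrm{Adj}(A)$; in particular one should be able to verify the algebraic identity
\[
1 + \frac{\alpha - 1}{\dfrac{a_n}{a_{n-1}} - a + 1} \;=\; \frac{A_{n,2,1}}{A_{n,3,1}},
\]
so the whole problem reduces to showing $A_{n,2,1}/A_{n,3,1} \to \alpha^{1/3}$, i.e.\ to Khovanskii's claim with a proof of convergence.

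Next I would analyze the eigenvalues of $A$. Factoring the characteristic polynomial, one checks that the three roots are $\lambda_k = a + \omega^k \alpha^{1/3} + \omega^{2k}\alpha^{2/3}$ for $k = 0,1,2$, where $\omega = e^{2\pi i/3}$; equivalently $\lambda_0 = a + \alpha^{1/3} + \alpha^{2/3}$ is real and dominant, while $\lambda_1, \lambda_2$ are complex conjugates. The crucial inequality is that $|\lambda_0| > |\lambda_1| = |\lambda_2|$ precisely when $a > -\alpha^{2/3}/(1 + \alpha^{1/3})$, which is exactly hypothesis \eqref{acond}: one computes $|\lambda_1|^2 = \lambda_1\lambda_2$ and compares with $\lambda_0^2$, and the condition $\lambda_0 > |\lambda_1|$ rearranges to \eqref{acond} after dividing through (the borderline case being when $\lambda_0$ equals the modulus of the complex pair). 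With a strict dominant eigenvalue, standard theory (or the closed form of Theorem \ref{t2}) gives $A^n \sim \lambda_0^n\, P$ where $P$ is the spectral projector onto the $\lambda_0$-eigenspace, so $A_{n,2,1}/A_{n,3,1} \to P_{2,1}/P_{3,1}$. Computing this ratio from the eigenvector of $A$ for $\lambda_0$ — one can take the eigenvector $(\alpha^{2/3}, \alpha^{1/3}, 1)^T$, which is easily checked — yields the limit $\alpha^{1/3}$, and then reversing the algebraic identity above gives \eqref{ratiolim}.

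The main obstacle is the eigenvalue-modulus comparison: verifying cleanly that $\lambda_0 > |\lambda_1|$ is equivalent to \eqref{acond}, and handling the degenerate possibilities (a repeated dominant eigenvalue, or $\lambda_0$ negative, or the span direction $P_{3,1} = 0$ which would make the ratio ill-defined). I expect that writing $|\lambda_1|^2 = a^2 - a\alpha^{1/3} + \alpha^{2/3} - \cdots$ explicitly in terms of $a$ and $\alpha^{1/3}$ and comparing to $\lambda_0^2$ reduces everything to a quadratic inequality in $a$ whose relevant root is exactly $-\alpha^{2/3}/(1+\alpha^{1/3})$; one also needs $\alpha > 1$ to guarantee the eigenvector has $P_{3,1} \neq 0$ and that $x + y + 1 \neq 0$ in the sense of \eqref{Aneq}. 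A secondary, purely bookkeeping obstacle is confirming that the summation ranges in the stated $a_n$ (written loosely as $\sum_{i,j}$) match the constraint $2i + 3j \le n$ from Corollary \ref{c1}, and that the coefficient identity relating the two ratio expressions holds — this is routine linear algebra using the explicit entries of $\mathrm{Adj}(A)$ but must be done carefully.
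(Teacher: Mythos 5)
Your proposal is correct and follows essentially the same route as the paper: identify the characteristic polynomial of $A$, invoke Corollary \ref{c1} to write $A^n$ (hence $A_{n,2,1}$ and $A_{n,3,1}$) as linear combinations of $a_{n-1}$ and $a_{n-2}$ so that the expression in \eqref{ratiolim} equals $A_{n,2,1}/A_{n,3,1}$ up to an index shift, and then diagonalize $A$ with eigenvalues $a+\omega^k\alpha^{1/3}+\omega^{2k}\alpha^{2/3}$ to show the dominant-eigenvalue condition is exactly \eqref{acond} and the limiting ratio is $\alpha^{1/3}$. The only (immaterial) discrepancy is the off-by-one index in your claimed identity, since $\delta_n/\rho_n$ involves $a_{n-1}/a_{n-2}$ rather than $a_n/a_{n-1}$.
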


\begin{proof}
Let $\omega := \exp ( 2 \pi \imath/3)$ and set
\[
A=
\left (
\begin{matrix}
a & \alpha &\alpha\\
1&a&\alpha\\
1&1&a
\end{matrix}
\right).
\]
The eigenvalues of $A$ are
\begin{align}\label{betas}
\beta_{1}&= a + {\alpha }^{1/3} + {\alpha }^{2/3},\\
 \beta_{2}&= a + {\alpha }^{1/3}\,\omega  + {\alpha }^{2/3}\,{\omega }^2, \notag\\
\beta_{3}&=  a + {\alpha }^{2/3}\,\omega  + {\alpha }^{1/3}\,{\omega }^2. \notag
\end{align}
Note that $\beta_{1}$ is positive for any $a$ satisfying \eqref{acond}. Further, for such $a$,
\begin{equation*}
\left |
\frac{\beta_{2}}{\beta_{1}}
\right |^{2}
=
 \left |
\frac{\beta_{3}}{\beta_{1}}
\right |^{2}
=
\frac{\beta_{2}\beta_{3}}{\beta_{1}^{2}}
=
\frac{a^2 - a\,{\alpha }^{1/3} +
    {\alpha }^{2/3} -
    a\,{\alpha }^{2/3} - \alpha  +
    {\alpha }^{4/3}}{{\left( a +
       {\alpha }^{1/3} +
       {\alpha }^{2/3} \right) }^2}<1,
\end{equation*}
so that $\beta_{1}>|\beta_{2}|=|\beta_{3}|$.
Let
{\allowdisplaybreaks
\begin{align*}
&M=
\left (
\begin{matrix}
 {\alpha }^{2/3} & {\alpha }^{2/3}\,{\omega }^2 & {\alpha }^
    {2/3}\,\omega  \\
{\alpha }^{1/3} & {\alpha }^{1/3}\,
   \omega  & {\alpha }^{1/3}\,{\omega }^2 \\
1 & 1 & 1
\end{matrix}
\right ), &&
D=
\left (
\begin{matrix}
 \beta_{1}& 0 & 0 \\
0& \beta_{2} & 0 \\
0 & 0 & \beta_{3}
\end{matrix}
\right ).&
\end{align*}
}
Then $A=M\,D\, M^{-1}$ and so
\begin{multline*}
A^{n}=M\,D^{n}\, M^{-1}
=\\
\left (
\begin{matrix}
 \frac{{{{\beta }_1}}^n + {{{\beta }_2}}^n + {{{\beta }_3}}^n}{3}&
   \frac{{\alpha }^{1/3}\,\left( {{{\beta }_1}}^n + \omega \,{{{\beta }_2}}^n
+\omega^{2}  \,{{{\beta }_3}}^n \right) }{3}&
   \frac{{\alpha }^{2/3}\,\left( {{{\beta }_1}}^n
        +\omega^{2}  \,{{{\beta }_2}}^n + \omega \,{{{\beta }_3}}^n \right) }{3} \\
&&\\
     \frac{{{{\beta }_1}}^n +\omega^{2}  \,{{{\beta }_2}}^n +
      \omega \,{{{\beta }_3}}^n}{3\,{\alpha }^{1/3}}&
   \frac{{{{\beta }_1}}^n + {{{\beta }_2}}^n + {{{\beta }_3}}^n}{3}&
   \frac{{\alpha }^{1/3}\,\left( {{{\beta }_1}}^n + \omega \,{{{\beta }_2}}^n
        +\omega^{2}  \,{{{\beta }_3}}^n \right) }{3} \\
&&\\
   \frac{{{{\beta }_1}}^n + \omega \,{{{\beta }_2}}^n
      +\omega^{2}  \,{{{\beta }_3}}^n}{3\,{\alpha }^{2/3}}&
   \frac{{{{\beta }_1}}^n +\omega^{2}  \,{{{\beta }_2}}^n +
      \omega \,{{{\beta }_3}}^n}{3\,{\alpha }^{1/3}}&
   \frac{{{{\beta }_1}}^n + {{{\beta }_2}}^n + {{{\beta }_3}}^n}{3}
\end{matrix}
\right).
\end{multline*}
Let $A_{n,\,i,\,j}$ denote the $(i,j)$-th entry of $A^{n}$. It is now easy to see
(since $\beta_{1}>|\beta_{2}|=|\beta_{3}|$) that
\begin{equation}\label{Anlim}
\lim_{n \to \infty}
\frac{A_{n,\,2,\,1}}
{A_{n,\,3,\,1}}
=
\alpha^{1/3} \lim_{n \to \infty}
\frac{{{{\beta }_1}}^n + \omega^{2}  \,{{{\beta }_2}}^n +\omega  \, {{{\beta }_3}}^n}
{{{{\beta }_1}}^n +\omega \,{{{\beta }_2}}^n +
      \omega^{2}  \,{{{\beta }_3}}^n}
=\alpha^{1/3}.
\end{equation}

On the other hand, the characteristic polynomial of $A$ is
\[
X^{3}=3\,a\,X^{2}-(3 a^{2}-3 \alpha)X+ a^3+\alpha -3 a \alpha + \alpha^{2}.
\]
It follows from Corollary \ref{c1}, that if
$t=3a$, $s=3 a^{2}-3 \alpha$, $d = a^3+\alpha -3 a \alpha + \alpha^{2}$
and
{\allowdisplaybreaks
\begin{align*}
a_n &=
\sum_{2i+3j \leq n} (-1)^i \binom{i+j}{ j} \binom{n-i-2j }{i+j} t^{n-2i-3j}s^id^j,\\
\gamma_{n}&:=\left( a^3 + \alpha  - 3\,a\,\alpha  + {\alpha }^2 \right) \,{a_{n-3 }} -
  2\,\left( a^2 - \alpha  \right) \,{a_{n-2 }} + a\,{a_{n-1 }},\\
\delta_{n}&:=\left( -a + \alpha  \right) \,{a_{n-2 }} + {a_{n-1 }},\\
\rho_{n}&:=\left( 1 - a \right) \,{a_{n-2 }} + {a_{n-1 }},\\
\end{align*}
}
then
\begin{equation*}
A^{n}=
\left (
\begin{matrix}
 \gamma_{n}& \alpha \rho_{n} & \alpha  \delta_{n} \\
\delta_{n}&  \gamma_{n}& \alpha \rho_{n}  \\
 \rho_{n}& \delta_{n} &\gamma_{n}
\end{matrix}
\right ).
\end{equation*}
Thus \eqref{ratiolim} now follows
 by comparing $\lim_{n \to \infty}\delta_{n}/\rho_{n}$ with the limit found above.
\end{proof}
Remarks:
(a) Note that the limit in \eqref{ratiolim} is independent of the choice of $a$,
so that various corollaries can be obtained from
particular choices of $a$.\\
(b) A similar method can be used to approximate square roots and
roots of
higher order (see   Section 4).\\
(c) The pairs $(1,2)$ and $(1,3)$ in \eqref{Anlim} can be replaced by
other pairs to give limits of the form $\alpha^{j/3}$, $-4 \leq j \leq 4$.

\begin{corollary}
Let $\alpha$ be a positive integer. Set
\begin{equation*}
a_n=
\sum_{
2i+3j\leq n
}\binom{i+j}{ j}\binom{n-i-2j }{i+j}3^{n-i-3j}(\alpha -1)^{i+2j}.
\end{equation*}
Then
\begin{equation*}
\lim_{n \to \infty} 1+
(\alpha -1)
\displaystyle{
 \frac{a_{n-1}}{a_{n}} }=
\alpha^{1/3}.
\end{equation*}
\end{corollary}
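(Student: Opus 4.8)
The plan is to derive this corollary as a special case of Theorem~\ref{t3} by making the substitution $a = 1$. First I would observe that $a = 1$ satisfies the hypothesis \eqref{acond}, since $-\alpha^{2/3}/(1+\alpha^{1/3}) < 0 < 1$ for every $\alpha > 1$ (and the case $\alpha = 1$ is trivial, as the limit is $1$). With $a = 1$ the three parameters appearing in the definition of $a_n$ in Theorem~\ref{t3} simplify: $t = 3a = 3$, $s = 3a^2 - 3\alpha = 3(1-\alpha) = -3(\alpha-1)$, and $d = a^3 + \alpha - 3a\alpha + \alpha^2 = 1 + \alpha - 3\alpha + \alpha^2 = 1 - 2\alpha + \alpha^2 = (\alpha-1)^2$.

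Next I would substitute these values into the sum
\[
a_n = \sum_{2i+3j \leq n} (-1)^i \binom{i+j}{j}\binom{n-i-2j}{i+j} t^{\,n-2i-3j} s^i d^j.
\]
The sign $(-1)^i$ cancels against the $(-1)^i$ coming from $s^i = (-3(\alpha-1))^i = (-1)^i 3^i (\alpha-1)^i$, leaving $3^i(\alpha-1)^i$. Combining with $t^{\,n-2i-3j} = 3^{\,n-2i-3j}$ and $d^j = (\alpha-1)^{2j}$, the power of $3$ becomes $3^{\,n-2i-3j+i} = 3^{\,n-i-3j}$ and the power of $(\alpha-1)$ becomes $(\alpha-1)^{i}(\alpha-1)^{2j} = (\alpha-1)^{i+2j}$. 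This reproduces exactly the $a_n$ in the statement of the corollary.

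Finally I would specialize the conclusion \eqref{ratiolim}. With $a = 1$ the expression $a_n/a_{n-1} - a + 1$ becomes simply $a_n/a_{n-1}$, so Theorem~\ref{t3} gives
\[
\lim_{n\to\infty}\left(1 + \frac{\alpha-1}{a_n/a_{n-1}}\right) = \alpha^{1/3},
\]
and rewriting $(\alpha-1)/(a_n/a_{n-1})$ as $(\alpha-1)\,a_{n-1}/a_n$ yields the claimed limit. I do not anticipate a genuine obstacle here; the only points requiring a little care are checking that $a = 1$ really lies in the admissible range (immediate) and carrying out the bookkeeping of exponents when the $-3$ from $s$ is absorbed into the power of $3$ — it is precisely this absorption that changes $3^{\,n-2i-3j}$ into $3^{\,n-i-3j}$ and is easy to misread. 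One should also note that when $\alpha = 1$ every term with $i + 2j > 0$ vanishes and the formula degenerates, but then both sides equal $1$ trivially, so the statement still holds as written.
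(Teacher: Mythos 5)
Your proof is correct and is exactly the paper's argument: the paper's proof of this corollary is simply ``Let $a=1$ in Theorem~\ref{t3},'' and your verification of the exponent bookkeeping (absorbing the $-3$ from $s$ into the power of $3$ to get $3^{\,n-i-3j}$ and $(\alpha-1)^{i+2j}$) together with the check that $a=1$ satisfies \eqref{acond} fills in precisely the details the paper leaves implicit.
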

\begin{proof}
Let $a=1$ in Theorem \ref{t3}.
\end{proof}

\begin{corollary}
Let $\alpha$ be a positive integer. Set
\begin{align*}
a_n=
\sum_{
i=0
}^{n}\binom{2n+i }{2n-2i}3^{3i}\alpha^{2n+i}(\alpha +1)^{2n-2i},\\
b_n=
\sum_{
i=0
}^{n-1}\binom{2n+i }{2n-2i-1}3^{3i+1}\alpha^{2n+i}(\alpha +1)^{2n-2i-1}.
\end{align*}
Then
\begin{equation*}
\lim_{n \to \infty} 1+
\frac{\alpha -1}{
\displaystyle{
 \frac{a_{n}}{b_{n}} }+1}=
\alpha^{1/3}.
\end{equation*}
\end{corollary}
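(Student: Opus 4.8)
The plan is to read this corollary off from Theorem \ref{t3} by taking $a = 0$. That choice is legitimate: for every integer $\alpha > 1$ the condition \eqref{acond} becomes $0 > -\alpha^{2/3}/(1 + \alpha^{1/3})$, which is true because the right-hand side is negative; the case $\alpha = 1$ is trivial, both sides of the claimed limit being $1$. With $a = 0$ the quantities in Corollary \ref{c1} are $t = 3a = 0$, $s = 3a^2 - 3\alpha = -3\alpha$, $d = a^3 + \alpha - 3a\alpha + \alpha^2 = \alpha(\alpha + 1)$; writing $c_N$ for the sequence that Theorem \ref{t3} calls $a_N$ (to keep it apart from the $a_n$ of the present statement), Theorem \ref{t3} asserts
\[
\lim_{n \to \infty}\left(1 + \frac{\alpha - 1}{\dfrac{c_n}{c_{n-1}} + 1}\right) = \alpha^{1/3}.
\]

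Next I would simplify $c_N$ and match it to the sums in the statement. Since $t = 0$, the factor $t^{N - 2i - 3j}$ annihilates every term except those with $2i + 3j = N$, for which also $\binom{N - i - 2j}{i + j} = \binom{i + j}{i + j} = 1$; moreover $(-1)^i s^i = (-1)^i(-3\alpha)^i = (3\alpha)^i$. Hence
\[
c_N = \sum_{2i + 3j = N}\binom{i + j}{j}\,3^{i}\,\alpha^{i + j}\,(\alpha + 1)^{j}.
\]
Now the key computation: the nonnegative solutions of $2i + 3j = 6n$ are precisely $(i, j) = (3(n - k),\,2k)$ for $0 \le k \le n$, so the substitution $k = n - i$ transforms $c_{6n}$ into $\sum_{i=0}^{n}\binom{2n + i}{2n - 2i}3^{3i}\alpha^{2n + i}(\alpha + 1)^{2n - 2i}$, which is exactly $a_n$; and the nonnegative solutions of $2i + 3j = 6n - 1$ are precisely $(i, j) = (3n - 2 - 3k,\,2k + 1)$ for $0 \le k \le n - 1$, so the substitution $k = n - 1 - i$ transforms $c_{6n-1}$ into $\sum_{i=0}^{n-1}\binom{2n + i}{2n - 2i - 1}3^{3i + 1}\alpha^{2n + i}(\alpha + 1)^{2n - 2i - 1}$, which is exactly $b_n$. (Here $b_n = c_{6n-1}$ and $a_n = c_{6n}$ are sums of positive terms for $n \ge 1$, so no denominator in the statement vanishes.)

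Finally, a convergent sequence shares its limit with every subsequence; applying this to $n \mapsto 6n$ in the display above and substituting $c_{6n} = a_n$, $c_{6n-1} = b_n$ gives $\lim_{n\to\infty}\bigl(1 + (\alpha - 1)/((a_n/b_n) + 1)\bigr) = \alpha^{1/3}$, which is the assertion. The only real content is the combinatorial bookkeeping of the middle paragraph, and the one spot to watch is choosing the substitutions ($i \mapsto n - k$ for the index $6n$, and $i \mapsto n - 1 - k$ for $6n - 1$) so that the exponents of $3$, $\alpha$, $\alpha + 1$ and, above all, the binomial coefficients line up with $\binom{2n + i}{2n - 2i}$ and $\binom{2n + i}{2n - 2i - 1}$ as written; with the right substitution this is a one-line check in each case.
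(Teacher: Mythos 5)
Your proposal is correct and is exactly the paper's argument (the paper's entire proof is ``Let $a=0$ and replace $n$ by $6n$ in Theorem \ref{t3}''), with the combinatorial bookkeeping that identifies $c_{6n}=a_n$ and $c_{6n-1}=b_n$ carried out explicitly and correctly. The substitutions $j=2k$, $i=3(n-k)$ for $N=6n$ and $j=2k+1$, $i=3n-3k-2$ for $N=6n-1$ check out, and your separate handling of $\alpha=1$ is a sensible extra precaution.
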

\begin{proof}
Let $a=0$ and replace $n$ by $6n$ in Theorem \ref{t3}.
\end{proof}

\begin{corollary}
Let $\alpha$ be a positive integer. Set
\begin{equation*}
a_n=
\sum_{
i=0
}^{\lfloor n/3 \rfloor}
\binom{n-2i }{i}3^{n-3i}\alpha^{n-i}(\alpha -1)^{2i}.
\end{equation*}
Then
\begin{equation*}
\lim_{n \to \infty} 1+
\frac{\alpha^{2} -1}{
\displaystyle{
 \frac{a_{n}}{a_{n-1}} }-\alpha +1}=
\alpha^{2/3}.
\end{equation*}
\end{corollary}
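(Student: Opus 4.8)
The plan is to derive this corollary from Theorem~\ref{t3} by the substitution $\alpha \mapsto \alpha^2$, so that the relevant cube root becomes $(\alpha^2)^{1/3} = \alpha^{2/3}$, and then to specialize the free parameter $a$ appropriately so that the generating sum collapses to the single-index form stated. First I would apply Theorem~\ref{t3} verbatim with $\alpha$ replaced by $\alpha^2$; its hypothesis requires $a > -\alpha^{4/3}/(1+\alpha^{2/3})$, which is certainly satisfied by any nonnegative integer choice of $a$. The theorem then asserts
\begin{equation*}
\lim_{n \to \infty} 1 + \frac{\alpha^2 - 1}{\dfrac{a_n}{a_{n-1}} - a + 1} = (\alpha^2)^{1/3} = \alpha^{2/3},
\end{equation*}
with
\begin{equation*}
a_n = \sum_{i,j} (-1)^i \binom{i+j}{j}\binom{n-i-2j}{i+j}(3a)^{n-2i-3j}(3a^2 - 3\alpha^2)^i (a^3 + \alpha^2 - 3a\alpha^2 + \alpha^4)^j.
\end{equation*}
Comparing the target expression $\frac{a_n}{a_{n-1}} - \alpha + 1$ in the denominator with $\frac{a_n}{a_{n-1}} - a + 1$ suggests the choice $a = \alpha$.

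With $a = \alpha$ the second factor becomes $3a^2 - 3\alpha^2 = 0$, so every term with $i \ge 1$ vanishes and only the $i = 0$ terms survive. The $j$-th term then carries $(3\alpha)^{n-3j}$ and $(\alpha^3 + \alpha^2 - 3\alpha^3 + \alpha^4)^j = (\alpha^2(\alpha^2 - 2\alpha + 1))^j = (\alpha^2(\alpha-1)^2)^j = \alpha^{2j}(\alpha-1)^{2j}$, with binomial coefficient $\binom{j}{j}\binom{n-2j}{j} = \binom{n-2j}{j}$. Hence
\begin{equation*}
a_n = \sum_{j=0}^{\lfloor n/3\rfloor} \binom{n-2j}{j}(3\alpha)^{n-3j}\alpha^{2j}(\alpha-1)^{2j} = \sum_{j=0}^{\lfloor n/3\rfloor}\binom{n-2j}{j}3^{n-3j}\alpha^{n-j}(\alpha-1)^{2j},
\end{equation*}
which is exactly the $a_n$ in the statement (with summation index $j$ renamed $i$). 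Since this $a_n$ is, up to a factor independent of $n$, the same sequence, the ratio $a_n/a_{n-1}$ is unchanged, and substituting $a = \alpha$ into the limit from Theorem~\ref{t3} gives precisely
\begin{equation*}
\lim_{n\to\infty} 1 + \frac{\alpha^2 - 1}{\dfrac{a_n}{a_{n-1}} - \alpha + 1} = \alpha^{2/3}.
\end{equation*}

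I expect the only real point requiring care — the ``main obstacle,'' though a mild one — is the bookkeeping in the collapse of the double sum: confirming that $a = \alpha$ indeed kills the middle coefficient $s = 3a^2 - 3\alpha^2$, simplifying $d = a^3 + \alpha^2 - 3a\alpha^2 + \alpha^4$ correctly to $\alpha^2(\alpha-1)^2$, and checking that the resulting exponents of $3$, $\alpha$, and $(\alpha-1)$ match the claimed single-index formula. One should also note for completeness that multiplying a linear recurrence sequence by a nonzero constant does not change consecutive ratios, so passing between the ``official'' $a_n$ of Theorem~\ref{t3} and the displayed one is harmless; and that $\alpha^2 - 1 \ne 0$ is needed, i.e. the statement is vacuous or trivial for $\alpha = 1$, which is consistent with the $\alpha > 1$ hypothesis inherited from Theorem~\ref{t3}.
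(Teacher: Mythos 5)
Your proposal is correct and is exactly the paper's argument: the paper proves this corollary by replacing $\alpha$ with $\alpha^{2}$ and setting $a=\alpha$ in Theorem~\ref{t3}, and your verification that $s=3a^{2}-3\alpha^{2}$ vanishes and $d=\alpha^{2}(\alpha-1)^{2}$ collapses the double sum to the stated single sum is the (omitted) bookkeeping behind that one-line proof. Your closing remarks about the $\alpha=1$ case and constant rescaling are harmless extras.
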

\begin{proof}
Replace $\alpha$ by $\alpha^{2}$ and then let $a=\alpha$ in Theorem \ref{t3}.
\end{proof}

It is clear from \eqref{Anlim} that the smaller the ratios
$|\beta_{2}/\beta_{1}| =|\beta_{3}/\beta_{1}| $, the
faster will be the rate of convergence in \eqref{ratiolim}. It
is also clear from \eqref{betas} that these ratios can be made
arbitrarily close to 1 by choosing $a$ arbitrarily large. We are interested
in how small this ratio can be made (to get fastest convergence) and what is the
optimal choice of $a$ for a given $\alpha$ to produce this smallest ratio.

\begin{theorem}\label{tdiff}
Let $\alpha$ and $a$ be as described in Theorem \ref{t3}. Let
the matrix $A$ be as described at \eqref{mateq}. Then  the choice of $a$ which
gives the most rapid convergence is one of the two integers closest to
\begin{equation}\label{aeq}
\bar{a}=  \frac{{\alpha }^{1/3} + \alpha }
  {1 + {\alpha }^{1/3}}.
\end{equation}
For this choice of $a$ and $n \geq 3$,
\begin{equation}\label{Anapproxim}
\frac{A_{n,\,2,\,1}}
{A_{n,\,3,\,1}}-\,\alpha^{1/3}
=\left(
\left(   \omega  -1\right) \,\omega \,
    \left( {\left(\frac{ -\omega}{2}  \right) }^n - {\left( \frac{-{\omega }^2 }{2}\right) }^n \right)
+\frac{n\delta_{3}}{2^{n}\alpha^{1/3}}
+\frac{\delta_{4}}
{2^{2n}}
\right)\alpha^{1/3},
\end{equation}
where $\omega = \exp (2 \pi \imath/3)$, $|\delta_{3}|\leq 8$ and $|\delta_{4}|\leq 48$.
\end{theorem}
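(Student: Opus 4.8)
The plan is to optimize the ratio $|\beta_2/\beta_1|^2 = \beta_2\beta_3/\beta_1^2$ as a function of the real parameter $a$, and then to expand $A_{n,2,1}/A_{n,3,1}$ explicitly using the eigenvalue formula for $A^n$ established in the proof of Theorem \ref{t3}. First I would treat the convergence-rate question: from \eqref{Anlim} and the displayed formula for $A^n = MD^nM^{-1}$, the error $A_{n,2,1}/A_{n,3,1} - \alpha^{1/3}$ is governed by the size of $\beta_2/\beta_1$ (equal in modulus to $\beta_3/\beta_1$), so fastest convergence means minimizing $g(a) := (\beta_2\beta_3)/\beta_1^2$. Using $\beta_2\beta_3 = a^2 - a\alpha^{1/3} + \alpha^{2/3} - a\alpha^{2/3} - \alpha + \alpha^{4/3}$ and $\beta_1 = a + \alpha^{1/3} + \alpha^{2/3}$ (both from the proof of Theorem \ref{t3}), I would differentiate $g$ with respect to $a$, set $g'(a) = 0$, and solve the resulting equation. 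The calculation should collapse: writing $N(a)$ for the numerator and $D(a) = \beta_1^2$, the condition $N'D - ND' = 0$ is quadratic (indeed should factor so that the relevant root is) $\bar a = (\alpha^{1/3}+\alpha)/(1+\alpha^{1/3})$. Since $a$ must be an integer, the optimal integer choice is one of the two integers bracketing $\bar a$; a brief convexity or monotonicity remark on $g$ near $\bar a$ justifies that no other integer does better.

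Next I would compute the minimum value of $g$ at $a = \bar a$. Substituting $\bar a$ into $g$ and simplifying — this is the step where I expect the algebra to be heaviest, but it should telescope because $\bar a$ was chosen to be a critical point — I claim one gets $g(\bar a) = 1/4$, i.e. $|\beta_2/\beta_1| = |\beta_3/\beta_1| = 1/2$ exactly. Equivalently $\beta_2/\beta_1 = -\omega/2$ and $\beta_3/\beta_1 = -\omega^2/2$ (the two complex cube-root-of-unity rotations, with the sign and ordering fixed by matching arguments of $\beta_2,\beta_3$). It is worth isolating this as the crucial identity, since everything in \eqref{Anapproxim} is built from it. I would verify it by a direct substitution, perhaps setting $\tau = \alpha^{1/3}$ so that $\bar a = (\tau + \tau^3)/(1+\tau) = \tau(1+\tau^2)/(1+\tau)$ and the expressions become rational in $\tau$.

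Finally, for the explicit error expansion: from $A^n = MD^nM^{-1}$ the $(2,1)$ and $(3,1)$ entries are, up to the factors $1/(3\alpha^{1/3})$ and $1/(3\alpha^{2/3})$, equal to $\beta_1^n + \omega^2\beta_2^n + \omega\beta_3^n$ and $\beta_1^n + \omega\beta_2^n + \omega^2\beta_3^n$ respectively. Dividing numerator and denominator by $\beta_1^n$ and writing $r := \beta_2/\beta_1 = -\omega/2$, $\bar r := \beta_3/\beta_1 = -\omega^2/2$, the ratio becomes $\alpha^{1/3}\,\dfrac{1 + \omega^2 r^n + \omega \bar r^n}{1 + \omega r^n + \omega^2 \bar r^n}$. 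Since $|r| = |\bar r| = 1/2$, I would expand $1/(1 + \omega r^n + \omega^2\bar r^n) = 1 - (\omega r^n + \omega^2\bar r^n) + O(1/4^n)$, multiply out, and collect: the leading term is $(\omega^2 - \omega)(r^n - \bar r^n)$, which (using $\omega^2 - \omega = (\omega-1)\omega$ after a sign check and $r^n - \bar r^n = (-\omega/2)^n - (-\omega^2/2)^n$) matches the first summand in \eqref{Anapproxim}; the $O(1/2^n)$-with-linear-$n$ correction and the $O(1/4^n)$ remainder furnish the $\delta_3$ and $\delta_4$ terms. The bounds $|\delta_3| \le 8$, $|\delta_4| \le 48$ come from crude term-by-term estimates: each $|r^n|, |\bar r^n| \le 2^{-n}$, the relevant cross terms number a bounded amount, and the geometric tail $\sum_{k\ge 2}|\omega r^n + \omega^2\bar r^n|^k$ is dominated by $2\cdot(2\cdot 2^{-n})^2/(1 - 2\cdot 2^{-n}) \le $ a small constant over $4^n$ for $n \ge 3$. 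The main obstacle is bookkeeping: being careful with the arguments of $\beta_2,\beta_3$ so that the complex phases $r = -\omega/2$, $\bar r = -\omega^2/2$ come out with exactly the signs stated, and then tracking which terms land in $\delta_3$ versus $\delta_4$ so the constants $8$ and $48$ are actually attained as upper bounds; the underlying estimates themselves are routine geometric-series bounds.
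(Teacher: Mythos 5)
Your overall strategy matches the paper's: minimize $h(a)=\beta_2\beta_3/\beta_1^2$ over real $a$ to locate $\bar a$, then expand $A_{n,2,1}/A_{n,3,1}$ via the eigenvalue formula. However, there is a genuine error at what you yourself flag as the crucial identity. It is \emph{not} true that $h(\bar a)=1/4$, nor that $\beta_2/\beta_1=-\omega/2$ exactly. Writing $t=\alpha^{1/3}$, a direct substitution gives
\[
h(\bar a)=\frac{(t-1)^2}{4\,(1+t+t^2)}\;<\;\frac14 ,
\]
which only tends to $1/4$ as $\alpha\to\infty$. (Check $\alpha=8$: $h(\bar a)=1/28$, so $|\beta_2/\beta_1|\approx 0.19$, not $1/2$.) The correct statement, which the paper establishes for the integer choice $a'=\bar a+\eta$ with $|\eta|<1$, is $\beta_2/\beta_1=-\omega/2+\delta_1\alpha^{-1/3}$ and $\beta_3/\beta_1=-\omega^2/2+\delta_2\alpha^{-1/3}$ with $|\delta_1|,|\delta_2|<1$.

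This is not a cosmetic slip: the term $n\delta_3/(2^n\alpha^{1/3})$ in \eqref{Anapproxim} arises precisely from the binomial expansion of $\left(-\omega/2+\delta_1\alpha^{-1/3}\right)^n$, whose first correction is of size $n\,2^{-(n-1)}\alpha^{-1/3}$. Under your assumption that the ratios equal $-\omega/2$ and $-\omega^2/2$ exactly, that entire term would be identically zero, so you could not derive the stated exact equality with the $\delta_3$ contribution; the very shape of the theorem's error term signals that the ratios cannot be exactly $-\omega/2$. A second, related gap: the theorem concerns the nearest \emph{integer} to $\bar a$, so the expansion must be carried out at $a'=\bar a+\eta$ rather than at $\bar a$; the paper does this by studying $h$ as a function of $\eta$ (showing $\beta_2\beta_3/\beta_1^2<1/4$ still holds there) before extracting the $\delta_1,\delta_2$ bounds. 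Your optimality argument for the integer choice is fine, but the subsequent expansion needs to be done at $a'$, carrying the $\alpha^{-1/3}$ corrections through to produce $\delta_3$ and $\delta_4$.
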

\begin{proof}
For the moment we consider $a$ to be a real variable and define
\begin{equation*}
h(a) =
\left |
\frac{\beta_{2}}{\beta_{1}}
\right |^{2}
=
 \left |
\frac{\beta_{3}}{\beta_{1}}
\right |^{2}
=
\frac{\beta_{2}\beta_{3}}{\beta_{1}^{2}}
=
\frac{a^2 - a\,{\alpha }^{1/3} +
    {\alpha }^{2/3} -
    a\,{\alpha }^{2/3} - \alpha  +
    {\alpha }^{4/3}}{{\left( a +
       {\alpha }^{1/3} +
       {\alpha }^{2/3} \right) }^2}.
\end{equation*}
The function $h(a)$ achieves its minimum at
\[
a=\bar{a}:=\frac{{\alpha }^{1/3} + \alpha }
  {1 + {\alpha }^{1/3}} \text{ and }
h(\bar{a})=
\frac{{\left( -1 + {\alpha }^{1/3} \right) }^2}
  {4\,\left( 1 + {\alpha }^{1/3} +
      {\alpha }^{2/3} \right) }.
\]
Hence for large $\alpha$ the best possible choice of $a$ is one of
the two integers closest to $\bar{a}$, say
\[
a'=\frac{{\alpha }^{1/3} + \alpha }
  {1 + {\alpha }^{1/3}} + \eta,
\]
with $|\eta| < 1$.
With this choice,
\begin{align*}
\frac{\beta_{2}\beta_{3}}{\beta_{1}^{2}}
&=\frac{1}{4}
-\frac{3}{4}\frac{\left( 1 +
         {\alpha }^{1/3} +
         {\alpha }^{2/3} \right)  4\,\alpha  +
      \left( 1 + {\alpha }^{1/3} \right) \,
       \eta \,\left( 4\,{\alpha }^{2/3} -
         \eta  - {\alpha }^{1/3}\,\eta
         \right)   }
{{\left( 2\,
        \left( 1 + {\alpha }^{1/3} +
          {\alpha }^{2/3} \right) \,
        {\alpha }^{1/3} +
       \left( 1 + {\alpha }^{1/3} \right) \,
        \eta  \right) }^2}\\
&=:\frac{1}{4}
+\frac{3}{4}g(\eta).
\end{align*}
Next, considering $g(\eta)$ as a function of $\eta$,
\[
g'(\eta)=\frac{4\,{\left( 1 + {\alpha }^{1/3} \right)
        }^4\,{\alpha }^{1/3}\,\eta }{{\left(
       2\,{\alpha }^{1/3} +
       2\,{\alpha }^{2/3} + 2\,\alpha  +
       \eta  + {\alpha }^{1/3}\,\eta  \right)
      }^3}
\]
Thus, since $g'(0)=0$ and $g(1)<g(-1)$,
\begin{align*}
&\frac{1}{4}
+\frac{3}{4}g(0)\leq
\frac{\beta_{2}\beta_{3}}{\beta_{1}^{2}}
\leq \frac{1}{4}
+\frac{3}{4}g(-1),
\end{align*}
or
{\allowdisplaybreaks
\begin{multline*}
\frac{1}{4}-
\frac{3}{4}
\frac{\,{\alpha }^{1/3}}
  {\,\left( 1 + {\alpha }^{1/3} +
      {\alpha }^{2/3} \right) }
\leq
\frac{\beta_{2}\beta_{3}}{\beta_{1}^{2}}\leq
\\
 \frac{1}{4}-
\frac{3}{4}
\frac{\,\left( -1 + {\alpha }^{1/3} \right)
      \,\left( 1 + 3\,{\alpha }^{1/3} +
      8\,{\alpha }^{2/3} + 8\,\alpha  +
      4\,{\alpha }^{4/3} \right) }
{\,
    {\left( -1 + {\alpha }^{1/3} +
        2\,{\alpha }^{2/3} + 2\,\alpha
        \right) }^2}.
\end{multline*}
}

Thus $\beta_{2}\beta_{3}/\beta_{1}^{2}<1/4$ or $|\beta_{2}/\beta_{1}|=
|\beta_{3}/\beta_{1}|<1/2$, for $\alpha >1$.
\begin{align}\label{deleq}
\frac{\beta_{2}}{\beta_{1}}&
=
 \frac{{\alpha }^{1/3} -
       {\alpha }^{2/3} + \eta  +
       {\alpha }^{1/3}\,\eta  +
       {\alpha }^{1/3}\,\omega  -
       \alpha \,\omega }
{2\,{\alpha }^{1/3} +
       2\,{\alpha }^{2/3} + 2\,\alpha  +
       \eta  + {\alpha }^{1/3}\,\eta }
\\
&
 = -\frac{\omega}{2}+
\frac{\delta_{1}}{\alpha^{1/3}}
, \notag  \\
\frac{\beta_{3}}{\beta_{1}}&
=\frac{-{\alpha }^{2/3} + \alpha  + \eta  +
     {\alpha }^{1/3}\,\eta  -
     {\alpha }^{1/3}\,\omega  +
     \alpha \,\omega }{2\,{\alpha }^{1/3} +
     2\,{\alpha }^{2/3} + 2\,\alpha  + \eta  +
     {\alpha }^{1/3}\,\eta } \notag
\\
& =
-\frac{\omega^{2}}{2}+ \frac{\delta_{2}}{\alpha^{1/3}}, \notag
\end{align}
where $|\delta_{1}|$, $|\delta_{2}| < 1$. (We omit the details of these calculations.
The first equation is simply solved for $\delta_{1}$,  the solution is multiplied by its
conjugate $\overline{\delta_{1}^{}}=\delta_{2}$,
the resulting real number is shown to be monotone
decreasing as a function of $\eta$ by differentiating with respect to $\eta$,
and finally it is shown that $\delta_{1}\overline{\delta_{1}^{}}<1$ at $\eta=-1$.)

Note that these
ratios $|\beta_{2}/\beta_{1}|=
|\beta_{3}/\beta_{1}|$ increase quite slowly with $\alpha$: $|\beta_{2}/\beta_{1}|<0.45$,
for $\alpha < 3000$, for example.
Returning to large $\alpha$,
{\allowdisplaybreaks
\begin{align}\label{Anapprox}
\frac{A_{n,\,2,\,1}}
{A_{n,\,3,\,1}}&-\,\alpha^{1/3}
=
 \left (
\frac{{{{\beta }_1}}^n + \omega^{2}  \,{{{\beta }_2}}^n +\omega  \, {{{\beta }_3}}^n}
{{{{\beta }_1}}^n +\omega \,{{{\beta }_2}}^n +
      \omega^{2}  \,{{{\beta }_3}}^n}-1
\right )\,\alpha^{1/3}\\
&=\frac{\left( 1 - \omega  \right) \,\omega \,
      \left( -{{{\beta }_2}}^n + {{{\beta }_3}}^n \right) }{{{{\beta }_1}}^n +
      \omega \,{{{\beta }_2}}^n + {\omega }^2\,{{{\beta }_3}}^n} \alpha^{1/3}\notag \\
&=\left(
\left(   \omega  -1\right) \,\omega \,
    \left( {\left(\frac{ -\omega}{2}  \right) }^n - {\left( \frac{-{\omega }^2 }{2}\right) }^n \right)
+\frac{n\delta_{3}}{2^{n}\alpha^{1/3}}
+\frac{\delta_{4}}
{2^{2n}}
\right)\alpha^{1/3},\notag
\end{align}
}
where $|\delta_{3}|\leq 8$ and $|\delta_{4}|\leq 48$. Note that
we have used \eqref{deleq} to replace the ratios $\beta_{2}/\beta_{1}$ and
 $\beta_{3}/\beta_{1}$ in the final expression.
\end{proof}

Remark: Note that for $n\geq 3$ and $\alpha>2^{4n}$,  we have
the following:
\begin{equation}
2^{n}
\left (\frac{A_{n,\,2,\,1}}
{A_{n,\,3,\,1}\,\alpha^{1/3}}-1
\right)
=
\begin{cases}
-3+\frac{K_{n}}{2^{n}}, & n \equiv 1,\,2\, (\mod 6),\\
0+\frac{K_{n}}{2^{n}}, & n \equiv 3,\,6 \,(\mod 6),\\
3+\frac{K_{n}}{2^{n}}, & n \equiv 4,\,5 \,(\mod 6),\\
\end{cases}
\end{equation}
where $|K_{n}|<61$.

\section{Approximating the Real Root of an Arbitrary Cubic}
If the zeros of $a\,x^3+b x^2+c x+ d$ are $\beta_{1}$, $\beta_{2}$ and
$\beta_{3}$, then the zeros of
$x^{3}+(9ac-3 b^{2})x+ 2b^{3} - 9abc + 27a^{2}d$ are
$3a\beta_{1}+b$, $3a\beta_{2}+b$ and
$3a\beta_{3}+b$. Thus, in finding the roots of a general cubic equation,
it is sufficient to study cubics of the form $f(x)=x^3-px-q$.
For simplicity, here we restrict to the case $p>0$, $q>0$ and $27q^{2}-4p^{3}>0$,
so that $f(x)$ has exactly one real root, which is largest in absolute value.
We have the following theorem.
\begin{theorem}\label{tpq}
Let $p>0$, $q>0$ be integers such that $27q^{2}-4p^{3}>0$. Define
\[
a_{n}=\sum_{
2i+3j\leq n
}\binom{i+j}{ j}\binom{n-i-2j }{i+j}3^{n-2i-3j}(3-p)^{i}(q-p+1)^{j}.
\]
Then
\begin{equation}
-1+\lim_{n \to \infty}\frac{a_{n}}{a_{n-1}}=
\frac{{\left( 2/3\right) }^{1/3}\,p}
   {{\left( 9\,q +{\sqrt{81\,q^2-12\,p^3 }} \right) }^{1/3}} +
  \frac{{\left( 9\,q + {\sqrt{81\,q^2-12\,p^3 }} \right) }^{1/3}}
   {2^{1/3}\,3^{2/3}},
\end{equation}
the real root of $f(x)=x^{3}-px-q$.
\end{theorem}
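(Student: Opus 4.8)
The plan is to mimic the proof of Theorem \ref{t3}, replacing the matrix $A$ from \eqref{mateq} by the companion-type matrix whose characteristic polynomial is $x^3 - 3x^2 + (3-p)x - (q-p+1)$, i.e.\ the polynomial obtained from $f(x) = x^3 - px - q$ by the substitution $x \mapsto x-1$ (equivalently, the matrix $B$ with $X^3 = 3X^2 - (3-p)X + (q-p+1)$ as characteristic polynomial, so that $t=3$, $s=3-p$, $d=q-p+1$ in the notation of Corollary \ref{c1}). The roots of this shifted polynomial are $\beta_i := r_i + 1$, where $r_1, r_2, r_3$ are the roots of $f$. First I would verify that the stated limit is exactly $r_1$, the unique real root of $f$: one checks by Cardano's formula that the right-hand side of the displayed equation is the real root of $x^3 - px - q$, so the claim reduces to showing $\lim_{n\to\infty} a_n/a_{n-1} = \beta_1 = r_1 + 1$.

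Next, by Corollary \ref{c1} with $t=3$, $s=3-p$, $d=q-p+1$, the sequence $a_n$ in the statement is precisely the one governing $B^n = a_{n-1}B + a_{n-2}\,\mathrm{Adj}(B) + (a_n - 3a_{n-1})I$. The key point is that $a_n$ satisfies the linear recurrence with characteristic polynomial $X^3 - 3X^2 + (3-p)X - (q-p+1) = \prod_i (X - \beta_i)$, so $a_n = c_1 \beta_1^n + c_2 \beta_2^n + c_3 \beta_3^n$ for constants $c_i$ determined by initial conditions. To conclude $a_n/a_{n-1} \to \beta_1$, it suffices to show (i) $\beta_1 = r_1 + 1$ strictly dominates $|\beta_2|$ and $|\beta_3|$ in modulus, and (ii) $c_1 \neq 0$. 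For (i), the hypotheses $p>0$, $q>0$, $27q^2 - 4p^3 > 0$ guarantee $f$ has one real root $r_1$ and a complex-conjugate pair $r_2, \bar r_2$; since $r_1 r_2 r_3 = q > 0$ and $r_2 r_3 = |r_2|^2 > 0$ we get $r_1 > 0$, and moreover $r_1 > |r_2|$ because $r_1 > 0 > -r_1$ combined with $r_1 r_2 r_3 = q$ forces $r_1^2 > r_1 \cdot$ something — more cleanly, $r_1$ is the root largest in absolute value (this is asserted in the text preceding the theorem), and the shift $r \mapsto r+1$ with $r_1 > 0$ preserves the strict domination $\beta_1 = r_1 + 1 > |r_i + 1|$ for $i = 2,3$, which I would check directly by estimating $|r_2 + 1|^2 = |r_2|^2 + 2\Re(r_2) + 1$ against $(r_1+1)^2$ using the Newton identities ($r_1 + 2\Re r_2 = 0$, so $2\Re r_2 = -r_1$, giving $|r_2+1|^2 = |r_2|^2 - r_1 + 1 < r_1^2 - r_1 + 1 < r_1^2 + 2r_1 + 1$ when $r_1 > 0$). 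For (ii), the cleanest route is to diagonalize $B = M D M^{-1}$ as in Theorem \ref{t3} and read off that the relevant matrix entry ratio $B_{n,i,j}/B_{n,u,v}$ has a nonvanishing $\beta_1^n$-coefficient; alternatively, one argues $c_1 = 0$ would force $a_n$ to grow like $|\beta_2|^n$, contradicting positivity/growth of the explicit binomial sum for large $n$.

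Having established both, $a_n/a_{n-1} = (c_1\beta_1^n + O(|\beta_2|^n))/(c_1\beta_1^{n-1} + O(|\beta_2|^{n-1})) \to \beta_1$, and subtracting $1$ gives $r_1$, matching the Cardano expression. The main obstacle I anticipate is the verification that $\beta_1$ strictly dominates the other two eigenvalues in modulus after the shift — this is geometrically obvious when $r_1$ is large but needs a careful elementary argument (via the Newton identities as above) to cover all admissible $p, q$, and one must also handle the degenerate possibility that $|\beta_2| = |\beta_3|$ equals $\beta_1$, which the strict inequality $27q^2 - 4p^3 > 0$ together with $r_1 > 0$ rules out. The remaining ingredients — identifying $a_n$ via Corollary \ref{c1}, confirming $c_1 \neq 0$, and recognizing the Cardano form of the real root — are routine.
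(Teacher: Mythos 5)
Your proposal is correct and follows essentially the same route as the paper: both identify $a_n$ via Corollary \ref{c1} with powers of a matrix whose eigenvalues are $r_i+1$ (the roots of $f$ shifted by $1$), establish that the real eigenvalue dominates in modulus, confirm the leading coefficient is nonzero via the diagonalization, and match the limit with the Cardano expression. If anything, your Newton-identity argument for $\beta_1 = r_1+1 > |r_i+1|$ (using $2\Re r_2 = -r_1$ and $|r_2|^2 = r_1^2 - p$) fills in a dominance claim the paper merely asserts; just note that your fallback ``positivity of the binomial sum'' argument for $c_1 \neq 0$ is shaky since $3-p$ and $q-p+1$ may be negative, so stick with the diagonalization route there.
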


\begin{proof}
As before, let $\omega = \exp ( 2 \pi \imath/3)$ and set
\[
A=
\left (
\begin{matrix}
1 & p & q \\
1&1& 0 \\
0&1&1
\end{matrix}
\right).
\]
Define
\begin{align*}
&\alpha=\frac{{\left( 2/3\right) }^{1/3}\,p}
   {{\left( 9\,q -{\sqrt{81\,q^2-12\,p^3 }} \right) }^{1/3}},& &\beta =
  \frac{{\left( 9\,q - {\sqrt{81\,q^2-12\,p^3 }} \right) }^{1/3}}
   {2^{1/3}\,3^{2/3}}.&
\end{align*}
The eigenvalues of $A$ are
\begin{align}\label{gammas}
\gamma_{1}&= 1 + {\alpha } +\beta,\\
 \gamma_{2}&= 1 + {\alpha }\,\omega^{2}  + {\beta }\,{\omega }, \notag\\
\gamma_{3}&=  1 + {\alpha }\,\omega  + {\beta }\,{\omega }^2. \notag
\end{align}
Set \[
M=\left (
\begin{matrix}
 (  \alpha  + \beta )^{2}  & ( \beta \,\omega  + \alpha \,{\omega }^2 )^{2}
& (  \alpha \,\omega  + \beta \,{\omega }^2 )^{2} \\
 \alpha  + \beta  & \beta \,\omega  + \alpha \,{\omega }^2 & \alpha \,\omega  + \beta \,{\omega }^2 \\
 1  & 1 & 1
\end{matrix}
\right)
\]
and then
\[
M^{-1}A\,M=\left (
\begin{matrix}
\gamma_{1} & 0 & 0 \\
0&\gamma_{2}& 0 \\
0&0&\gamma_{3}
\end{matrix}
\right).
\]
Here we use the facts that $q=\alpha^{3}+\beta^{3}$ and $p = 3 \alpha \beta$.
Clearly
\[
A^{n}=M\left (
\begin{matrix}
\gamma_{1}^{n} & 0 & 0 \\
0&\gamma_{2}^{n} & 0 \\
0&0&\gamma_{3}^{n}
\end{matrix}
\right)M^{-1}.
\]

As before, let $A_{n,\,i\,j}$ denote the $(i,\,j)$ entry of $A^{n}$.
It is straightforward to show (preferably after using a computer algebra system like
\emph{Mathematica} to perform the
matrix multiplications) that
\begin{align*}
A_{n,2,1}&=\frac{\left( -1 + {{\gamma }_1} \right) \,{{{\gamma }_1}}^n}
   {\left( {{\gamma }_1} - {{\gamma }_2} \right) \,
     \left( {{\gamma }_1} - {{\gamma }_3} \right) } +
  \frac{\left( -1 + {{\gamma }_3} \right) \,{{{\gamma }_3}}^n}
   {\left( {{\gamma }_1} - {{\gamma }_3} \right) \,
     \left( {{\gamma }_2} - {{\gamma }_3} \right) } +
  \frac{\left( -1 + {{\gamma }_2} \right) \,{{{\gamma }_2}}^n}
   {\left( {{\gamma }_1} - {{\gamma }_2} \right) \,
     \left( -{{\gamma }_2} + {{\gamma }_3} \right) } ,\\
A_{n,3,1}&=\frac{-\left( {{{\gamma }_2}}^n\,{{\gamma }_3} \right)  +
    {{\gamma }_2}\,{{{\gamma }_3}}^n +
    {{{\gamma }_1}}^n\,\left( -{{\gamma }_2} + {{\gamma }_3} \right)  +
    {{\gamma }_1}\,\left( {{{\gamma }_2}}^n - {{{\gamma }_3}}^n \right) }
    {\left( {{\gamma }_1} - {{\gamma }_2} \right) \,
    \left( {{\gamma }_1} - {{\gamma }_3} \right) \,
    \left( -{{\gamma }_2} + {{\gamma }_3} \right) }.
\end{align*}
Since $\gamma_{1}>|\gamma_{2}|$, $|\gamma_{3}|$, it follows that
\begin{equation}\label{pqlim}
\lim_{n \to \infty} \frac{A_{n,\,2,\,1}}{A_{n,\,3,\,1}}=\gamma_{1}-1 = \alpha + \beta.
\end{equation}
Next, the real zero of $x^{3}-p\,x-q=0$ is
\[
\frac{{\left( 2/3\right) }^{1/3}\,p}
   {{\left( 9\,q +{\sqrt{81\,q^2-12\,p^3 }} \right) }^{1/3}} +
  \frac{{\left( 9\,q + {\sqrt{81\,q^2-12\,p^3 }} \right) }^{1/3}}
   {2^{1/3}\,3^{2/3}},
\]
and  some simple algebraic manipulation shows that
this is equal to $\alpha + \beta$, so that
the limit at \eqref{pqlim} is indeed equal to this real zero.

Finally, the characteristic polynomial of $A$ is
\[
X^{3}=3\,X^{2}-(3-P)X+q+1-p,
\]
so that Corollary \ref{c1} gives, after setting $t=3$, $d=q+1-p$ and $s=3-p$,
\[
a_{n}=\sum_{
2i+3j\leq n
}\binom{i+j}{ j}\binom{n-i-2j }{i+j}3^{n-2i-3j}(3-p)^{i}(q-p+1)^{j},
\]
and \[
\epsilon_{n}=\left( 1 - p + q \right) \,{a_{n-3 }} +
    \left( -2 + p \right) \,{a_{n-2 }} + {a_{n-1 }},
\]
that
\[
A^{n}=\left (
\begin{matrix}
\epsilon_{n}& \left(  q - p  \right) \,{a_{n-2}} + p\,{a_{n-1}} &q\,\left( {a_{n-1}} -{a_{n-2}}  \right)  \\
{a_{n-1}} -{a_{n-2}} & \epsilon_{n} &  q\,{a_{n-2}}\\
 {a_{n-2}}&  {a_{n-1}}-{a_{n-2}} &
\epsilon_{n}-p\,a_{n-2}
\end{matrix}
\right).
\]
The result now follows, after comparing $\lim_{\to \infty}A_{n,\,2,\,1}/A_{n,\,3,\,1}$ in the
matrix above with the limit found at \eqref{pqlim}.
\end{proof}

\section{Approximating roots of Arbitrary order of a positive integer}
Khovanskii shows that the method of section 2
 extends to roots of arbitrary  order $m$, by considering the $m \times m$
matrix
\begin{equation}\label{matneq}
A=
\left (
\begin{matrix}
a         & \alpha           &\alpha &\alpha &\dots &\alpha\\
1         &  a                  &\alpha&\alpha &\dots &\alpha\\
1         &          1         &a       &\alpha &\dots &\alpha\\
\vdots & \vdots           & \vdots & \ddots & \vdots & \vdots\\
1   & 1                  &  1        & \dots   &  a      & \alpha \\
1   & 1                  &  1        & \dots   &  1      & a
\end{matrix}
\right).
\end{equation}
Again his result is dependent on the existence of $\lim_{n \to \infty}
A_{n,\,i,\,j}/A_{n,\,u,\,v}$, for various pairs $(i,j)$ and $(u,v)$, but
he does not suggest any criteria which guarantee these limits exist.
We make his statement more precise in the following theorem.

\begin{theorem}\label{tm}
Let $A$ be the matrix defined above at \eqref{matneq}. Let $A_{n,\,i,\,j}$
denote the $(i,\,j)$ entry of $A^{n}$ and suppose $a>0$. Then
\begin{equation}\label{matnlim}
\lim_{n \to \infty}
\frac{A_{n,\,i,\,j}}
{A_{n,\,u,\,v}}
= \alpha^{(j+u-i-v)/m}.
\end{equation}
\end{theorem}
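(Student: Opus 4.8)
The plan is to diagonalize the matrix $A$ in \eqref{matneq} exactly as was done for the $3\times 3$ case, and to identify the dominant eigenvalue and the structure of the associated entries of $A^n$. First I would compute the eigenvalues. Writing $\zeta=\exp(2\pi\imath/m)$, one guesses (from the $m=3$ pattern in \eqref{betas}) that the eigenvalues are
\[
\beta_k = a + \sum_{r=1}^{m-1} \zeta^{(k-1)r}\,\alpha^{r/m},\qquad k=1,\dots,m,
\]
and I would verify this by exhibiting an explicit eigenvector-generating matrix: take $M$ with $(\ell,k)$-entry a suitable power $\alpha^{?}\zeta^{?}$ (again guided by the $3\times 3$ matrices $M$ displayed in the proof of Theorem \ref{t3}), so that $A = M D M^{-1}$ with $D=\mathrm{diag}(\beta_1,\dots,\beta_m)$. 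The single clean algebraic identity needed is that $A - aI$ acts as a ``cyclic shift with weight $\alpha$'': its $n$-th power relates to $(A-aI)$ in a way that collapses when paired against the geometric vectors $(1,\alpha^{-1/m}\zeta^{j},\alpha^{-2/m}\zeta^{2j},\dots)$.

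The key point is then that $\beta_1 = a + \sum_{r=1}^{m-1}\alpha^{r/m}$ is real, positive (since $a>0$ and $\alpha\ge 1$), and strictly dominant: for $k\ne 1$, $|\beta_k| < \beta_1$ by the triangle inequality, with equality impossible because the terms $\zeta^{(k-1)r}\alpha^{r/m}$ cannot all be simultaneously positive reals when $k\ne 1$ (here I use $a>0$ so that the constant term $a$ is ``genuinely in the interior''). Granting $\beta_1 > |\beta_k|$ for all $k\ge 2$, the $(i,j)$-entry of $A^n$ is, from $A^n = M D^n M^{-1}$,
\[
A_{n,i,j} = \sum_{k=1}^m M_{i,k}\,(M^{-1})_{k,j}\,\beta_k^{\,n} = M_{i,1}(M^{-1})_{1,j}\,\beta_1^{\,n} + O(|\beta_2|^{\,n}),
\]
so that $\lim_{n\to\infty} A_{n,i,j}/A_{n,u,v} = M_{i,1}(M^{-1})_{1,j} / \big(M_{u,1}(M^{-1})_{1,v}\big)$, provided these leading coefficients are nonzero. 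Since the first column of $M$ (the eigenvector for $\beta_1$) has $\ell$-th entry proportional to $\alpha^{-\ell/m}$ up to an $\ell$-independent factor, and the first row of $M^{-1}$ is proportional to $\alpha^{j/m}$ times an $j$-independent factor, the ratio telescopes to $\alpha^{(j+u-i-v)/m}$, which is exactly \eqref{matnlim}.

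The main obstacle I anticipate is twofold: (1) writing down the correct $M$ and $M^{-1}$ in closed form for general $m$ and checking $M^{-1}AM = D$ — this is a Vandermonde-type computation that is conceptually routine but notationally heavy, and one must be careful about the exact exponents of $\alpha$ and powers of $\zeta$ in each entry; and (2) confirming that the relevant leading coefficients $M_{i,1}(M^{-1})_{1,j}$ are nonzero, i.e. that the $\beta_1$-component of the standard basis vectors does not vanish — this follows once $M$ is invertible, but it should be stated. A slicker route that avoids inverting $M$ explicitly: note $A^n v_1 = \beta_1^n v_1$ where $v_1=(\alpha^{-1/m},\dots,\alpha^{-m/m})^{T}$ up to scaling gives the column structure, and apply the same reasoning to $A^{T}$ (which has the same eigenvalues and an analogously structured dominant eigenvector) to get the row structure; then $A_{n,i,j} \sim \beta_1^n\, (w_1)_i (v_1)_j / \langle w_1, v_1\rangle$ with $w_1$ the left dominant eigenvector, and the stated ratio drops out with $\langle w_1,v_1\rangle \ne 0$ the only thing to check. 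I would use whichever of these is cleaner to typeset; the convergence itself is immediate from $\beta_1 > |\beta_k|$ once the eigenstructure is in hand.
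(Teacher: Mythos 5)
Your proposal is correct and follows essentially the same route as the paper: an explicit diagonalization $A=MDM^{-1}$ with $(M)_{i,j}=\alpha^{(m-i)/m}\omega_m^{(m-j+1)i}$ and $(M^{-1})_{i,j}=1/(m\,(M)_{j,i})$, eigenvalues $\beta_k=a+\sum_{r=1}^{m-1}(\omega_m^{k-1}\alpha^{1/m})^r$ with $\beta_1$ strictly dominant for $a>0$, and the limit read off from the leading coefficients $M_{i,1}(M^{-1})_{1,j}=\alpha^{(j-i)/m}/m$, which are manifestly nonzero. The only difference is cosmetic: the paper records the resulting closed form $A_{n,i,j}=\frac{\alpha^{(j-i)/m}}{m}\sum_{k=1}^m\omega_m^{(m-k+1)(i-j)}\beta_k^n$ explicitly before taking the limit.
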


\begin{proof}
Let $\omega_{m}$ be a primitive $m$-th root of unity. Define the matrix $M$ by
\[
(M)_{i,\,j}= \alpha^{(m-i)/m}\omega_{m}^{(m-j+1)i}.
\]
Then
\[
(M^{-1})_{i,j}=\frac{1}{m\,(M)_{j,\,i}}.
\]
(We omit the proof of this statement. It can easily be checked
by  showing that multiplying
$M$ and the claimed inverse together gives the $m\times m$ identity matrix.)

It is now not difficult  to show that
\[
M^{-1}A\,M= \text { diag } (\beta_{1},\beta_{2}, \dots \beta_{m}),
\]
where \text { diag } $(\beta_{1},\beta_{2}, \dots \beta_{m}) $ is the matrix with
$\beta_{1},\beta_{2}, \dots \beta_{m}$ along the main diagonal and zeroes elsewhere. Here
\[
\beta_{i}=a+\sum_{j=1}^{m-1}(\omega_{m}^{i-1}\alpha^{1/m})^{j}, \hspace{25pt} i=1,2,\dots m,
\]
 are the eigenvalues of
$A$.
For $a>0$ , there is clearly a dominant eigenvalue, namely
$\beta_{1}$.

(This condition could be relaxed to allow $a$ to take some negative values,
but the precise lower bound which makes $\beta_{1}>|\beta_{j}|$, $j\not =1$,
is not so easy to determine in the case of arbitrary $m$.)

Next, it is clear that
$A^{n} = M \text{diag }(\beta_{1}^{n},\beta_{2}^{n}, \dots \beta_{m}^{n})M^{-1}$,
and it is simple algebra to show that
{\allowdisplaybreaks
\[
A_{n,\, i \, j} =\frac{\alpha^{(j-i)/m}}{m}
\sum_{k=1}^{m}\omega_{m}^{(m-k+1)(i-j)}\beta_{k}^{n}.
\]
}

The result now follows, upon using the fact that $\beta_{1}$ is the dominant
eigenvalue.
\end{proof}

Note, as in Theorem \ref{t3}, that the limit is independent of the choice of $a$.

Theorem \ref{t2} could be use to produce results
similar to those in Theorem \ref{t3} and
its various corollaries, but the statements of these results become much
more complicated
with increasing $m$.

Also, we have not been able to determine the optimum choice of
$a$ that gives the most rapid convergence in \eqref{matnlim}. One
difference between the $m=3$ case and the general case is that
the sub-dominant eigenvalues in the general case need not necessarily
all have the same absolute value.

\section{Concluding Remarks}
For completeness we include the following  neat construction
by Khovanskii, one that enables
good approximations to a root of an arbitrary
polynomial to be found in many cases.
Let
\begin{multline*}
A=\\
\left (
\begin{matrix}
   k      &   l\,a_{m}         & 0   &   \dots      &   0      &   0         & 0   &  0  \\
  0      &   k        &  l\,a_{m}    &   \dots      &   0      &   0         & 0   &  0  \\
   \vdots     &        \vdots   & \vdots       &  \vdots &    \vdots &  \vdots  &  \vdots  &  \vdots  \\
   0      &   0         & 0   &     \dots     &    l\,a_{m}       &   0         & 0   &  0  \\
  0      &   0         & 0   &  \dots         &   k      &    l\,a_{m}         & 0   &  0  \\
  0      &   0         & 0   &   \dots        &   0      &   k         & 0   &   l\,a_{m}   \\
  -l\,a_{0}  & -l\,a_{1} & -l\,a_{2}& \dots  &-l\,a_{m-4} & -l\,a_{m-3} & k-l\,a_{m-1}& -l\,a_{m-2}\\
  0      &   0         & 0   &    \dots        &   0      &   0         & l\,a_{m}  &  k
\end{matrix}
\right).
\end{multline*}

Here $k$ and $l$ are non-zero.
 If $\lim_{n \to \infty}A_{n,\,i,\,1}/A_{n,\,m,\,1}$ exists
and equals, say, $\beta_{i}$, for
$1 \leq i \leq m$, then $\beta_{m-1}$ is a root of
\[
f(x)=a_{n}x^{n}+a_{n-1}x^{n-1}+a_{n-2}x^{n-2}\dots a_{1}x+a_{0}.
\]

This can be seen as  follows. Since the limits exist and $\beta_{m}=1$,
we get the system of equations
{\allowdisplaybreaks
\begin{align*}
\beta_{i}&=\frac{k\,\beta_{i}+l\,a_{m}\beta_{i+1}}{l\,a_{m}\beta_{m-1}+k},
\hspace{40pt}1 \leq i \leq m-3,\\
\beta_{m-2}&=\frac{k\,\beta_{m-2}+l\,a_{m}}{l\,a_{m}\beta_{m-1}+k}\\
\beta_{m-1}&=\frac{-l\,a_{0}\beta_{1}-l\,a_{1}\beta_{2}- \dots-l\,a_{m-3}\beta_{m-2}+
(k-l\,a_{m-1})\beta_{m-1}-l\,a_{m-2}}
{l\,a_{m}\beta_{m-1}+k}.
\end{align*}
}
This system of equations leads to the system $\beta_{m-1}\beta_{m-2}=1$,
$\beta_{i+1}=\beta_{m-1}\beta_{i}$, $1\leq i \leq m-3$, and
\[
a_{m}\beta_{m-1}^{2}+a_{m-1}\beta_{m-1}+a_{m-2}+a_{m-3}\beta_{m-2}
+ \dots +a_{1}\beta_{2} +a_{0}\beta_{1}=0.
\]
The result now follows, after multiplying the last equation by $\beta_{m-1}^{m-2}$ and
using the equations preceding it to eliminate $\beta_{i}$, $i \not = m-1$.

This situation is of course even more difficult to analyze: $f(x)$ may not even
have real zeroes, or it may have multiple real zeroes, or even if it has
 a single real zero, this may not  be enough
to guarantee that the limits $\lim_{n \to \infty}A_{n,\,i,\,1}/A_{n,\,m,\,1}$,
$1 \leq i \leq m$,  exist,.

It would be interesting to find and prove general criteria, based
on the entries of the matrix $A$, which guarantee that this method
of Khovanskii's does lead to  convergence to one of the roots.



\end{document}